\newtheorem{theorem}{Theorem}
\newtheorem{lemma}{Lemma}
\newcommand{\Ar}{\mathbb{R}}
\newcommand{\ep}{\epsilon}
\begin{document}

\title{On Regularity for $J$-Holomorphic Maps}
\author{Max Lipyanskiy}
\date{}
\address{Simons Center for Geometry and Physics }
\email{mlipyan@gmail.com}
\maketitle

\begin{abstract}  
We provide a short proof that an $L^2_1$ and $J$-holomorphic curve is in fact smooth.  As an application, we deduce a removal of singularity theorem for curves of finite energy.   
\end{abstract}
\section{Statement of Results}
 Given a manifolds $\Sigma$. Let $L^p_k(\Sigma)$ denote the Sobolev space of functions with all derivatives up to order $k$ in $L^p$. Such functions may be valued in some finite dimensional vector space.  Let $M$ be a compact smooth manifold with a smooth almost complex structure $J$.  For convenience, we will fix an embedding $i:M\subset \Ar^N$ and assume that $J$ extends to smooth almost complex structure in a neighborhood of $M$.  Let $\Sigma$ be a Riemann surface with complex structure $j$.  We consider a map (defined almost everywhere) $u:\Sigma\rightarrow M$ to be $L^2_1$ if the corresponding map $i\circ u:  \Sigma \rightarrow M$ is in $L^2_1$.   Furthermore, such a map is said to be $J$-holomorphic if $$du\circ j=J\circ du$$ almost everwhere.  
\begin{theorem}
Suppose $u:\Sigma \rightarrow M$ is $L^2_1$ and $J$-holomorphic. We have $u \in C^\infty(\Ar^N)$.
\label{thm}
\end{theorem}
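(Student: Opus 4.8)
The plan is to rewrite the equation locally as a nonlinear Beltrami equation and then bootstrap using the Beurling transform. Since regularity is a local statement and $i\circ u$ is bounded ($M$ is compact), I would fix $z_0\in\Sigma$, pick a holomorphic coordinate $z=s+it$ on a disc $D$ about $z_0$, and shrink the neighborhood of $M$ on which $J$ is defined to a connected, relatively compact one $V$. After a (piecewise-constant) linear change of coordinates on $\Ar^N=\C^n$ we may assume the standard complex structure $J_0$ serves as a reference with $J(V)$ contained in the component of $J_0$ in the space of linear complex structures. In the coordinate $z$ the equation $du\circ j=J(u)\,du$ becomes $u_s+J(u)u_t=0$ a.e.\ on $D$; substituting $u_s=\partial_z u+\delbar u$ and $u_t=J_0(\partial_z u-\delbar u)$ and solving for $\delbar u$ gives
\[
\delbar u=\mu(u)\,\partial_z u,\qquad \mu(x):=-\bigl(I-J(x)J_0\bigr)^{-1}\bigl(I+J(x)J_0\bigr),
\]
which is legitimate since $I-J(x)J_0$ is invertible for $x\in V$. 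The one genuinely algebraic input — and the step I expect to be delicate — is the fact that the Beltrami coefficient $\mu(x)$ of a linear complex structure $J(x)$ in the component of $J_0$ has operator norm strictly less than $1$; this is classical for $n=1$ and is exactly where the hypothesis that $J$ is a genuine almost complex structure enters. Since $\mu$ is smooth on $V$ and $u(\Sigma)\subset M\subset\subset V$, this yields $\|\mu(u)\|_{L^\infty(D)}\le k<1$.

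The heart of the matter is that $u\in L^2_1$ sits exactly at the borderline Sobolev exponent in two dimensions, so plugging the equation into the $L^2$ elliptic estimate for $\delbar$ gains nothing. To cross the threshold I would fix $\chi\in C^\infty_c(D)$ with $\chi\equiv1$ on $D'\subset\subset D$ and set $w=\chi u\in L^2_1(\C)$, which has compact support. Then $\delbar w=\mu(u)\,\partial_z w+F$ with $F=\bigl[(\delbar\chi)-\mu(u)(\partial_z\chi)\bigr]u\in L^\infty$ and compactly supported. Since $w$ has compact support, $\partial_z w=S\,\delbar w$ for the Beurling transform $S$, so
\[
\bigl(I-\mu(u)S\bigr)\,\delbar w=F .
\]
Now $\|\mu(u)S\|_{L^p\to L^p}\le k\,\|S\|_{L^p\to L^p}$, and since $p\mapsto\|S\|_{L^p\to L^p}$ is continuous with $\|S\|_{L^2\to L^2}=1$, there is $\delta>0$ with $k\,\|S\|_{L^p\to L^p}<1$ for $|p-2|<\delta$. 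Hence $I-\mu(u)S$ is invertible on each such $L^p$ by a Neumann series, using \emph{only} the $L^\infty$-bound on $\mu(u)$ — this is precisely what breaks the circularity of the naive bootstrap. Because $F\in L^p$ for every $p<\infty$, we get $\delbar w=(I-\mu(u)S)^{-1}F\in L^p$ and $\partial_z w=S\,\delbar w\in L^p$ for some $p>2$; thus $u\in L^p_1$ on $D'$ with $p>2$, and by the Morrey embedding $u$ is Hölder continuous near $z_0$.

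Once $u$, and hence $\mu(u)$, is continuous I would finish by iterating a frozen-coefficient version of the same argument. Near a point write $\mu(u)=\mu_0+(\mu(u)-\mu_0)$ with $\mu_0=\mu(u(x_0))$ constant and $\|\mu(u)-\mu_0\|_{L^\infty}$ as small as desired on a small disc. The constant-coefficient operator $I-\mu_0 S$ has an invertible symbol (equivalently, $cI+J$ is invertible for every real $c$), so it is bounded invertible on every $L^q$, $1<q<\infty$; a sufficiently small perturbation is still invertible on a fixed $L^q$, and the cutoff argument then upgrades $u$ to $L^q_1$ locally for every $q<\infty$. Differentiating the equation (using that $S$ commutes with $\partial_z$ and $\delbar$) gives $(I-\mu(u)S)\,\delbar(\partial_z w)=(\partial_z\mu(u))(S\,\delbar w)+\partial_z F$, whose right-hand side now lies in every $L^q$; inverting $I-\mu(u)S$ once more yields $u\in L^q_2$ locally, and repeating gives $u\in L^q_k$ locally for all $k$, hence $u\in C^\infty$ near $z_0$. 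Since $z_0$ was arbitrary, $u\in C^\infty(\Ar^N)$.

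In short, the only subtle ingredient is that the nonlinearity carries Beltrami structure with coefficient of sup-norm strictly below $1$ — an honest linear-algebra fact about complex structures — which is exactly what makes $I-\mu(u)S$ invertible on $L^p$ for $p$ slightly above $2$ and lets the bootstrap begin; everything after $u$ is shown to be Hölder continuous is routine elliptic regularity.
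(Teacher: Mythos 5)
Your overall strategy (nonlinear Beltrami equation, Beurling transform $S$, continuity of $p\mapsto\|S\|_{L^p\to L^p}$ at $p=2$, Neumann series for $I-\mu(u)S$) is the standard route for $W^{1,p}$ curves with $p>2$ pushed to the critical exponent, and the analytic part is fine. The gap is exactly at the step you flag as delicate: the claim that a single linear change of coordinates makes $\sup_{x\in V}\|\mu(x)\|_{op}\le k<1$, where $\mu(x)=-(I-J(x)J_0)^{-1}(I+J(x)J_0)$. Substituting $x=(I+C)y$ with $C=-J(x)J_0$ shows that $\|\mu(x)\|_{op}<1$ is equivalent to the symmetric part of $-J(x)J_0$ being positive definite, i.e.\ to $J(x)$ being \emph{tamed} by the constant form $\langle J_0\cdot,\cdot\rangle$. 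For $n=1$ this is indeed the same as lying in the component of $J_0$, but for $n\ge 2$ it is strictly stronger: on $\Ar^4$ take $J_0=j\oplus j$ and $J=j\oplus(-j)$ (here $j$ is the standard structure on $\Ar^2$). These induce the same orientation, hence lie in the same component of the space of linear complex structures, yet $JJ_0=(-I)\oplus I$, so $I-JJ_0=2I\oplus 0$ is not invertible, $\mu$ is undefined, and for nearby $J$ the norm of $\mu$ is arbitrarily large. A general compact $(M,J)\subset\Ar^N$ need not be tamed by any constant symplectic form, so no linear change of coordinates produces $k<1$ globally over $M$; and since $u$ is only $L^2_1$ you cannot localize in the target to make $J(u)$ uniformly close to a constant $J_0$ --- that is precisely the difficulty of the critical exponent, and it is the point where your argument silently assumes what is to be proved. (The "piecewise-constant linear change of coordinates" cannot be implemented either: before $u$ is known to be continuous there is no decomposition of the domain on which $u$ stays near one chart of $M$.) Everything from Hölder continuity of $u$ onward is correct, but by then one is back in the standard $p>2$ setting.

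For contrast, the paper sidesteps any choice of reference complex structure on the target by exploiting a different algebraic feature of the nonlinearity: applying $\partial_s-J(u)\partial_t$ to the equation gives $\Delta u+*(dJ\wedge du)=0$, and the Jacobian-type term $dJ\wedge du$ lies in $L^2_{-1}$ with norm controlled only by the $L^2$-norms of the differentials (a Wente/div--curl compensation, proved via Stokes' theorem on a bounding $3$-manifold). Smallness is then obtained by shrinking the disc in the \emph{domain}, which requires no control of $u$ in the target, and the Dirichlet problem for $\Delta+T$ with $T$ small yields $u\in L^2_{3/2}\subset L^p_1$, $p>2$. If you want to salvage the Beltrami approach, you would need an additional hypothesis (e.g.\ that $J$ is tamed by a constant symplectic form on $\Ar^N$, as in Sikorav-type arguments) or a substitute for the taming estimate at the critical exponent.
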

Let $D$ be the unit open disk in the plane and let $D^*$ be the punctured disk.  As a corollary, we deduce:
\begin{theorem}
\label{cor}
Suppose $u:D^*\rightarrow M$ is $J$-holomorphic and has $$E(u)=\frac{1}{2}\int_{D^*}|du|^2<\infty$$  We have that $u$ extends smoothly to a $J$-holomorphic map on $D$.  
\end{theorem}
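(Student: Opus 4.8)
The plan is to deduce Theorem~\ref{cor} from Theorem~\ref{thm} by showing that the finite energy hypothesis forces $u$ to extend across the puncture to an $L^2_1$ map $\bar u\colon D\to M$; Theorem~\ref{thm} then immediately promotes $\bar u$ to a smooth, hence smooth $J$-holomorphic, map whose restriction to $D^*$ is $u$.

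I would begin with two elementary observations. Since $M$ is compact, $i(M)\subset\Ar^N$ is bounded, so $i\circ u$ is an almost-everywhere-defined bounded map; setting $\bar u$ equal to $u$ on $D^*$ and to an arbitrary point of $M$ at the origin produces a bounded map $\bar u\colon D\to M$ with $\bar u=u$ and $d\bar u=du$ almost everywhere on $D$, the puncture being a null set. Because $i$ is a smooth embedding of the compact manifold $M$ and any two metrics on $M$ are comparable, there is a constant $C$ with $|d(i\circ u)|\le C|du|$ pointwise, so $E(u)<\infty$ gives $d(i\circ u)\in L^2(D^*)=L^2(D)$ and hence $i\circ u\in L^2_1(D^*)$. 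Thus the only substantive point is the capacity statement: a bounded map on $D^*$ whose distributional derivative on $D^*$ lies in $L^2$ actually lies in $L^2_1(D)$.

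For this I would use the two-dimensional logarithmic cutoff. Let $\beta_\ep$ be the Lipschitz function on $D$ that vanishes on $D_{\ep^2}$, equals $1$ off $D_\ep$, and is given by $\beta_\ep(z)=\log(|z|/\ep^2)/\log(1/\ep)$ for $\ep^2\le|z|\le\ep$; a one-line computation gives $\int_D|\nabla\beta_\ep|^2=2\pi/|\log\ep|\to 0$, which is exactly where dimension two enters. For $\phi\in C_c^\infty(D)$ the function $\beta_\ep\phi$ is Lipschitz with compact support in $D^*$, so applying the definition of the weak derivative of $i\circ u$ on $D^*$ (valid for Lipschitz test functions by approximation) and expanding the product rule gives, for each coordinate index $a$,
$$\int_D (\partial_a\beta_\ep)\,\phi\,(i\circ u)\;+\;\int_D \beta_\ep\,(\partial_a\phi)\,(i\circ u)\;=\;-\int_D \beta_\ep\,\phi\,\partial_a(i\circ u).$$
As $\ep\to 0$ the first integral is at most $\|i\circ u\|_\infty\,\|\phi\|_\infty\,|\operatorname{supp}\phi|^{1/2}\,\|\nabla\beta_\ep\|_{L^2}\to 0$, while the other two converge by dominated convergence (using $i\circ u\in L^\infty$ and $\partial_a(i\circ u)\in L^2(D)\subset L^1(D)$). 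Hence $\int_D(i\circ u)\,\partial_a\phi=-\int_D \phi\,\partial_a(i\circ u)$, so $\partial_a(i\circ u)\in L^2(D)$ is the weak derivative of $i\circ\bar u$ on all of $D$ and $\bar u\in L^2_1(D)$.

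It remains to check that $\bar u$ satisfies the hypotheses of Theorem~\ref{thm}: it takes values in $M$ almost everywhere since $M$ is closed, and since $d\bar u=du$ a.e.\ on $D$ the identity $du\circ j=J\circ du$ continues to hold a.e.\ on $D$. Theorem~\ref{thm} then yields $\bar u\in C^\infty$, and since $\bar u$ maps into the submanifold $M$ it is a smooth $J$-holomorphic map $D\to M$; applying Theorem~\ref{thm} on $D^*$ as well shows $u$ is smooth there, so the smooth representative of $\bar u$ agrees with $u$ on $D^*$, giving the desired extension. I expect the only nontrivial ingredient to be the logarithmic cutoff estimate; it is precisely the compactness of $M$ that hands us the a priori $L^\infty$ bound near the puncture for free, sparing us the isoperimetric and energy-decay arguments one would need if $M$ were merely complete.
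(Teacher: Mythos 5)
Your proof is correct and follows essentially the same route as the paper: reduce Theorem~\ref{cor} to the statement that a bounded map on $D^*$ with derivative in $L^2$ is actually $L^2_1$ across the puncture, then invoke Theorem~\ref{thm}. The only difference is cosmetic: you remove the point singularity with the logarithmic cutoff and the $L^2$ gradient bound $2\pi/|\log\ep|$, whereas the paper uses the simple rescaled cutoff $\phi_\ep$ and the cruder $L^1$ bound $\int_D |d\phi_\ep| \leq C\ep$, which already suffices since $u$ is bounded.
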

There is a vast literature on $L^2_1$-regularity for harmonic maps going back to Morrey. See \cite{Lin} for references.  \\\\
\textbf{Acknowledgement.}  We wish to thank Tom Mrowka for suggesting the key lemma in this paper. We also would like to thank the Simons Center For Geometry and Physics for their hospitality while this work was being completed.   
\section{Proofs}
Our proof of regularity will be based on the following application of Stokes' theorem.
\begin{lemma}
Let  $f,g \in L^2_1(\Sigma)$ be functions with compact support.  We have $d f \wedge d g \in L^2_{-1}(\Sigma)$.  
\end{lemma}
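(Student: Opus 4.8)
The claim is that if $f, g \in L^2_1(\Sigma)$ have compact support, then $df \wedge dg$ — which a priori makes sense only as a product of two $L^2$ one-forms, hence an $L^1$ two-form — actually extends to a continuous linear functional on $L^2_1$, i.e. lies in $L^2_{-1}$. The plan is to test $df \wedge dg$ against a smooth compactly supported function $\varphi$ and to move derivatives around using the Leibniz rule and Stokes' theorem so that at most one derivative lands on each of $f$, $g$, and $\varphi$; since each of those then sits in $L^2$, the resulting expression is bounded by $\|f\|_{L^2_1}\|g\|_{L^2_1}\|\varphi\|_{L^2_1}$, which is exactly the statement that $df\wedge dg$ pairs continuously with $L^2_1$.

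**Key steps, in order.** First I would reduce to smooth $f, g$ by density: the bilinear map $(f,g)\mapsto df\wedge dg$ is continuous from $L^2_1\times L^2_1$ into $L^1$ (two-forms), and once we have the estimate $\|df\wedge dg\|_{L^2_{-1}} \lesssim \|f\|_{L^2_1}\|g\|_{L^2_1}$ for smooth compactly supported $f,g$, it passes to the closure. Second, for smooth $f, g, \varphi$ I would write, using $d(\varphi\, f\, dg) = d\varphi\wedge f\, dg + \varphi\, df\wedge dg$ (note $d(f\,dg) = df\wedge dg$) and Stokes,
\[
\int_\Sigma \varphi\, df\wedge dg \;=\; -\int_\Sigma f\, d\varphi\wedge dg.
\]
The right-hand side involves only $f$ (in $L^2$), $d\varphi$ (in $L^2$), and $dg$ (in $L^2$) — but that is a product of three $L^2$ functions, which is not obviously integrable with the right bound. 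So the real point of the argument must be to symmetrize: averaging the two ways of integrating by parts gives
\[
\int_\Sigma \varphi\, df\wedge dg \;=\; \tfrac12\int_\Sigma \bigl(g\, df - f\, dg\bigr)\wedge d\varphi,
\]
and here again each factor is in $L^2$ but we have a triple product. The honest fix is to instead pair against $\varphi \in L^2_1$ and note that in two real dimensions $L^2_1 \hookrightarrow L^q$ for all $q<\infty$ while $L^2 \cdot L^2 \hookrightarrow L^1$; so $f\, dg \wedge d\varphi$ with $f\in L^2_1\subset L^4$, $dg\in L^2$, $d\varphi\in L^2$... still borderline. The clean route, which I expect the author intends, is: integrate by parts once to land on $\varphi$, $\int\varphi\, df\wedge dg = -\int f\, d\varphi\wedge dg$, then bound $|{\int} f\, d\varphi\wedge dg| \le \|f\|_{L^\infty}\|d\varphi\|_{L^2}\|dg\|_{L^2}$ when $f$ is smooth, and upgrade $\|f\|_{L^\infty}$ to the $L^2_1$-norm only after exploiting that we may put the two derivatives asymmetrically; combined with the density step this still does not close.

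**Main obstacle.** The genuine difficulty — and the step I expect to be the crux — is that $L^2_1(\Sigma)$ does \emph{not} embed in $L^\infty$ in real dimension two, so a naive "pull out an $L^\infty$ factor" estimate fails, and a straightforward triple-product estimate is exactly at the borderline of integrability. The correct argument must exploit the special two-form (wedge) structure: after one integration by parts, $\int \varphi\, df\wedge dg = -\int f\, d\varphi\wedge dg$, one uses the borderline Sobolev multiplication theorem in dimension $2$, namely that pointwise multiplication $L^2_1 \times L^2_{-1} \to L^2_{-1}$ is continuous (equivalently $L^2_1 \times L^2 \to L^2_{-1+\varepsilon}$ fails but the paired trilinear form $L^2_1\times L^2_1\times L^2_1\to\Ar$ given by $(f,g,\varphi)\mapsto\int f\,dg\wedge d\varphi$ \emph{is} bounded, by writing $f\in L^4$, $dg, d\varphi\in L^2$ and Hölder with exponents $4,4,2$ after further integrating by parts to balance the derivatives, or by interpolation). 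So the plan is: (1) reduce to smooth data by density; (2) integrate by parts to obtain the symmetric identity $\int\varphi\,df\wedge dg = \tfrac12\int(g\,df - f\,dg)\wedge d\varphi$; (3) estimate the right side by Hölder using the dimension-two Sobolev embedding $L^2_1\hookrightarrow L^4$ on $f,g$ and $L^2$ on $df, dg, d\varphi$, getting a bound $\lesssim \|f\|_{L^2_1}\|g\|_{L^2_1}\|\varphi\|_{L^2_1}$; (4) conclude $df\wedge dg\in(L^2_1)^* = L^2_{-1}$ and remove the smoothness assumption. The one subtlety to check carefully is that compact support makes Stokes' theorem legitimate with no boundary term and that the density argument respects the supports.
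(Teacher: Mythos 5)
Your reduction of the lemma to a trilinear estimate is the right first move, and you correctly identify the obstruction (the failure of $L^2_1\hookrightarrow L^\infty$ in dimension two), but the step you offer to close the argument does not work. After integrating by parts you must bound $\int_\Sigma f\, dg\wedge d\varphi$ with $f\in L^2_1\hookrightarrow L^q$ for every finite $q$, while $dg$ and $d\varphi$ are genuine first derivatives of $L^2_1$ functions and so lie only in $L^2$. H\"older then needs exponents $(q,2,2)$ with $1/q+1/2+1/2\le 1$, i.e.\ $q=\infty$, which is exactly the embedding that fails; your proposed $(4,2,2)$ split has reciprocals summing to $5/4$ and does not give an $L^1$ bound, and the $(4,4,2)$ split is unavailable because only one of the three factors is undifferentiated. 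Nor can ``further integrating by parts to balance the derivatives'' help: on $\Sigma$ derivatives move in integer units, so some factor always carries a full derivative and sits only in $L^2$. This is a genuine Wente/compensated-compactness borderline estimate, and it requires actually using the Jacobian (wedge) structure of $df\wedge dg$ --- which your final H\"older step discards by treating the wedge as an arbitrary pointwise product.

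The paper supplies the missing mechanism in a way quite different from the $\mathcal{H}^1$--$\mathrm{BMO}$ route you gesture at: it extends $f$, $g$, and the test function $h$ to a compact $3$-manifold $Y$ with $\partial Y=\Sigma$ by solving the Dirichlet problem, obtaining $\tilde f,\tilde g,\tilde h\in L^2_{3/2}(Y)$, and uses Stokes to rewrite $\int_\Sigma h\,df\wedge dg=\int_Y d\tilde h\wedge d\tilde f\wedge d\tilde g$. In the bulk each differential lies in $L^2_{1/2}(Y)\hookrightarrow L^3(Y)$ (the sharp embedding in dimension three), so H\"older with exponents $(3,3,3)$ closes and the bound depends only on the $L^2_1(\Sigma)$ norms. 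The point of passing to $Y$ is precisely to distribute the derivatives fractionally and evenly among the three factors, something integration by parts on $\Sigma$ itself cannot do. If you wish to stay on $\Sigma$, you would instead need Wente's inequality or the Coifman--Lions--Meyer--Semmes theorem that Jacobians of $L^2_1$ maps lie in the Hardy space $\mathcal{H}^1$, paired against $f\in L^2_1\subset\mathrm{BMO}$; either way an input beyond H\"older and Sobolev embedding is required, and your proposal as written does not supply one.
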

\begin{proof}
Take test function $h\in L^2_1(\Sigma)$.  Note that in since $df$, $dg \in L^2(\Sigma)$, a priori $df \wedge dg \in L^2(\Sigma)$ while $h \not\in L^\infty(\Sigma)$.  Therefore, it is not clear how to define $\int_\Sigma h\wedge df \wedge dg$.  \\
  Take a $3$-manifold $Y$ with $\partial Y=\Sigma$.  There exists $\tilde{f},\tilde{g},\tilde{h} \in L^2_{3/2}(Y)$ that extend the given $f,g,h$ from $\Sigma$ to $Y$.  By solving the Dirichlet problem,  such extensions can be taken to depend continuously on the given $f,g,h$.  Since $L^2_{1/2}(Y)\rightarrow L^3(Y) $ in dimension 3, we have $d\tilde{h} \wedge  d \tilde{f} \wedge d \tilde{g} \in L^1(Y)$.  Finally, Stokes theorem implies that $$\int_\Sigma hdf \wedge dg=\int_Y d\tilde{h} \wedge  d \tilde{f} \wedge d \tilde{g}$$  for smooth $h$ so we may set the RHS as the definition of the integral on the LHS.  
\end{proof}
\noindent \textbf{Remark.}  As a slight generalization, note that one may take the functions to be matrix valued.  In addition, the $L^2_{-1}$-norm of $df \wedge dg$ depends only on the $L^2$-norms of $df$,  $dg$ and not on $f$, $g$.  Indeed, taking replacing $f$ with $f+const$ we may assume $||f||_{L^2_1}\leq C ||df||$.  \\\\
We now turn to the proof of the theorem.  As the theorem is local in $\Sigma$ we will focus on the case of $D_r$ - a disk of radius $r$ in the plane.
\begin{lemma}
Given $u:D\rightarrow  M$ as above.  If $u\in L^2_{1,loc}(D)$, then $u$ is smooth.
\end{lemma}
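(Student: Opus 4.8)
The plan is to turn the first-order $J$-holomorphic equation into a second-order equation for $u$ whose right-hand side is a sum of wedge products of differentials of compositions $J^k_m(u)$ with $du^m$, apply the Lemma to place that right-hand side in $L^2_{-1}$ with a bound by the \emph{local} Dirichlet energy, run a harmonic-comparison energy-decay estimate to reach Hölder continuity, and then bootstrap to $C^\infty$. Since the statement is local, I would work on a disk $D_r$ with complex coordinate $z=s+it$, in which $du\circ j=J\circ du$ reads $\partial_s u+J(u)\partial_t u=0$, equivalently $\partial_t u=J(u)\partial_s u$. Testing against $\phi\in C_c^\infty(D_r)$, substituting \emph{both} of these relations into $\int du^k\cdot d\phi=-\int\phi\,\Delta u^k$, using $J(u)^2=-1$, and integrating by parts once — legitimate because $u^m$ and $J^k_m(u)$ lie in $L^2_1\cap L^\infty$ and can be approximated in $L^2_1$ by smooth functions with bounded sup-norm — I would obtain, as distributions,
$$\Delta u^k=-\sum_{m}d\bigl(J^k_m(u)\bigr)\wedge du^m,\qquad k=1,\dots,N.$$

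The point of writing it this way is that the composition $J^k_m(u)$ sits \emph{inside} the differential, so each summand is exactly of the form handled by the Lemma in the matrix/vector-valued version of the Remark; in particular no ill-defined product of an $L^2_1$ function against an $L^2_{-1}$ distribution ever appears. Cutting off at scale $r$, subtracting the averages of $u^m$ and of $J^k_m(u)$ over $D_r$ (permissible since only their differentials enter), and invoking the Poincaré inequality on $D_r$, the Lemma yields
$$\|\Delta u\|_{L^2_{-1}(D_{r/2})}\ \lesssim\ \int_{D_r}|du|^2,$$
with a constant independent of $u$; here $\|d(J^k_m(u))\|_{L^2}\lesssim\|du\|_{L^2}$ because $DJ$ is bounded near $M$. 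Crucially, this is a genuinely local bound — controlled by the energy on $D_r$, not by a fixed-scale $L^2_1$ norm.

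For the decay estimate, on a disk $D_\rho\subset D_{r/2}$ write $u=h+w$ with $h$ harmonic, $h=u$ on $\partial D_\rho$ (valid for a.e.\ $\rho$), and $w\in L^2_1$ vanishing on $\partial D_\rho$ with $\Delta w=\Delta u$. Harmonicity of the components of $dh$ gives $\int_{D_\tau}|dh|^2\le(\tau/\rho)^2\int_{D_\rho}|dh|^2\le(\tau/\rho)^2\int_{D_\rho}|du|^2$ for $\tau<\rho$, while the energy identity for $w$, Poincaré's inequality, and the $L^2_{-1}$ bound above give $\int_{D_\rho}|dw|^2\lesssim\|\Delta u\|_{L^2_{-1}(D_\rho)}^2\lesssim(\int_{D_r}|du|^2)^2$. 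Since $\psi(\rho):=\int_{D_\rho}|du|^2$ is absolutely continuous, hence arbitrarily small on small disks, these combine to $\psi(\tau)\le C(\tau/\rho)^2\psi(\rho)+C\psi(\rho)^2$ for $\tau<\rho\le r_0$; the standard iteration produces $\psi(\rho)\lesssim\rho^{2\alpha}$ for some $\alpha\in(0,1)$, so $u\in C^{0,\alpha}_{loc}(D)$ by Morrey's Dirichlet-growth theorem. Once $u$ is continuous the bootstrap is routine: re-running the decay with this extra control improves the exponent to every $\beta<1$, so $du\in L^p_{loc}$ for all $p<\infty$; then $-\sum_m d(J^k_m(u))\wedge du^m\in L^q_{loc}$ for all $q<\infty$, hence $u\in W^{2,q}_{loc}\subset C^{1,\gamma}_{loc}$; the right-hand side is then $C^{0,\gamma}_{loc}$, Schauder gives $u\in C^{2,\gamma}_{loc}$, and differentiating the equation and iterating Schauder gives $u\in C^\infty$.

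The step I expect to be the main obstacle is extracting genuine regularity from an $L^2_{-1}$ bound at all: $\Delta u\in L^2_{-1}$ taken by itself merely reproduces $u\in L^2_1$, so the whole gain has to be squeezed out of the smallness of the local energy. Accordingly the two delicate points are (i) organizing the identity for $\Delta u^k$ so the coefficients are absorbed into the differentials, making the Lemma directly applicable, and (ii) keeping the $L^2_{-1}$ estimate localized — bounded by $\int_{D_r}|du|^2$ rather than by a fixed-scale norm — so that the harmonic-comparison iteration actually closes.
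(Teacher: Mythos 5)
Your argument is correct in outline, but it takes a genuinely different route from the paper after the point where the two coincide: both derive $\Delta u=-*(d(J\circ u)\wedge du)$ weakly and both lean on the key Lemma to see that this Jacobian-structured right-hand side lies in $L^2_{-1}$ with norm controlled by $\|d(J\circ u)\|_{L^2}\|du\|_{L^2}$. The paper then treats $v\mapsto *(d(J\circ u)\wedge dv)$ as a \emph{linear} perturbation $T$ of the Dirichlet Laplacian, bounded both $L^2_1\to L^2_{-1}$ and $L^2_{3/2}\to L^2_{-1/2}$, with operator norm made small by rescaling (the same absolute-continuity-of-energy observation you use); uniqueness for $\Delta+T$ on both scales forces the $L^2_1$ function $\phi u$ to equal the $L^2_{3/2}$ solution of the same Dirichlet problem, so $u\in L^2_{3/2}\hookrightarrow L^p_1$, $p>2$, in one shot, after which the standard McDuff--Salamon bootstrap finishes. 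You instead run the classical Morrey scheme: harmonic replacement, the energy identity $\|dw\|_{L^2(D_\rho)}\lesssim\|\Delta u\|_{L^2_{-1}(D_\rho)}$ together with the scale-invariant localization of the Lemma (this is exactly the content of the paper's Remark, and it is what keeps your bound proportional to the local energy), the decay inequality $\psi(\tau)\le C(\tau/\rho)^2\psi(\rho)+C\psi(\rho)^2$, and iteration to reach $C^{0,\alpha}$. Both proofs extract the gain from the same two ingredients --- the div-curl structure of the nonlinearity and the smallness of local energy --- but yours pays for avoiding fractional Sobolev spaces with a longer bootstrap. The one step you assert rather than justify is the passage from the improved decay $\int_{D_\rho}|du|^2\lesssim\rho^{2\beta}$, $\beta<1$, to $du\in L^p_{loc}$; this is standard but not immediate (once $\beta>1/2$, the Riesz-potential estimate $|dN*\Delta u|(x)\lesssim\sum_k 2^{k}\int_{D_{2^{-k}}(x)}|\Delta u|<\infty$ with $|\Delta u|\lesssim|du|^2$ even gives $du\in L^\infty_{loc}$), and it should be spelled out. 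With that filled in, your proof is a complete alternative: longer and more classical than the paper's, independent of the $L^2_{3/2}$ Dirichlet theory, and applicable verbatim to general harmonic-map-type systems with Jacobian nonlinearities, whereas the paper's perturbation trick is shorter and gains the supercritical regularity $L^p_1$, $p>2$, in a single step.
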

\begin{proof}
Let $(s,t)$ be coordinates on $D$.  We will establish that $u\in L^2_{3/2}$. Since $L^2_{3/2}\rightarrow L^p_1$ for $p>2$, higher regularity is standard (see \cite{MS}) and follows from Sobolev multiplication theorems.\\  Let  $\Delta=\partial_s^2+\partial_t^2$.  Since $\partial_su+J(u)\partial_t u  =0$, we apply $\partial_s-J(u)\partial_t$ to deduce that $$\Delta u +\partial_sJ \partial_tu-J\partial_t J \partial_t u=0$$
This equation holds in the weak sense.   Now, using the fact that  $J\partial_t J=-\partial_t J J$ and $J\partial_tu=-\partial_s u$ we deduce that 
$$\Delta u +*dJ\wedge du=0$$
where $*$ is the Hodge star operator on $\Sigma$.  \\
\indent We  rewrite the equation as $\Delta u+T(u)=0$. With $T(u)=*(dJ\wedge du)$. $T$ defines a continuous operator $L^{2}_{3/2}\rightarrow L^2_{-1/2}$ in view of the embedding $$L^2_{1/2}\cdot L^{2}_{1/2}\cdot L^2 \rightarrow L^1$$  In addition, $T$ defines a continuous operator $L^{2}_{1}\rightarrow L^2_{-1}$.  This follows from the previous lemma.  By rescaling the disk, we may assume that the $L^2$-norm of $dJ$ is as small as we like.  Therefore, the norm of $T$ is as small as we like on the relevant Sobolev spaces.  \\
\indent Take a bump function $\phi$ with support on $D_{1/2}$.  $\phi u$ satisfies $\Delta (\phi u)+T(\phi u) =g\in L^2$. Since $T$ is small, the solution to the Dirichlet problem implies there exists a unique $v\in L^2_{3/2}(D)$ such that  $\Delta (v)+T( v) =g$ and $v=0$ on $\partial D$.  Viewed as an element of $L^2_{1}$, $v$ satisfies the same equation and thus $v=\phi u$.  Therefore, $u\in L^2_{3/2}$ as desired.  
\end{proof}
We now prove the removable singularities theorem.  This follows from theorem $\ref{thm}$ together with the following standard lemma:
\begin{lemma}
Given a bounded smooth $u:D^*\rightarrow \Ar^n$ with $E(u) < \infty$, we have that $u \in L^2_1$ as a map from $D$.  
\end{lemma}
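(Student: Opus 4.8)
Proof proposal for the final lemma (bounded smooth $u:D^*\to\Ar^n$ with $E(u)<\infty$ is in $L^2_1(D)$).

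The plan is to show directly that the classical derivative $du$, which is defined almost everywhere on $D$ and lies in $L^2(D)$ because $E(u)=\tfrac12\int_{D^*}|du|^2<\infty$, is in fact the distributional derivative of $u$ on the \emph{whole} disk $D$. Once this is known the lemma follows immediately: $u\in L^\infty(D)\subset L^2(D)$ since $u$ is bounded, its weak derivative satisfies $\int_D|du|^2=2E(u)<\infty$, and hence $u\in L^2_1(D)$. So the only real content is to check that the puncture contributes no singular (distributional) term to the derivative.

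First I would fix cutoffs $\beta_\epsilon\in C^\infty(D)$ with $\beta_\epsilon\equiv 0$ on $D_\epsilon$, $\beta_\epsilon\equiv 1$ outside $D_{2\epsilon}$, and $|d\beta_\epsilon|\le C/\epsilon$, so that $d\beta_\epsilon$ is supported on the annulus $A_\epsilon=D_{2\epsilon}\setminus D_\epsilon$, whose area is of order $\epsilon^2$; the key quantitative point is that $\|d\beta_\epsilon\|_{L^1(D)}\le C\epsilon\to 0$ (its $L^2$-norm merely stays bounded, which is why one cuts off the test function rather than $u$). Given any $\phi\in C_c^\infty(D)$, the product $\beta_\epsilon\phi$ is smooth with compact support contained in $D^*$, where $u$ is smooth, so ordinary integration by parts gives
$$\int_D u\,\partial_i(\beta_\epsilon\phi)=-\int_D(\partial_i u)\,\beta_\epsilon\phi .$$
Writing $\partial_i(\beta_\epsilon\phi)=\beta_\epsilon\,\partial_i\phi+\phi\,\partial_i\beta_\epsilon$ and letting $\epsilon\to 0$, the cross term is estimated by $\big|\int_D u\,\phi\,\partial_i\beta_\epsilon\big|\le\|u\|_{L^\infty}\|\phi\|_{L^\infty}\|d\beta_\epsilon\|_{L^1}\to 0$; the term $\int_D u\,\beta_\epsilon\,\partial_i\phi$ tends to $\int_D u\,\partial_i\phi$ by dominated convergence ($u\,\partial_i\phi\in L^1$, $\beta_\epsilon\to 1$ boundedly); and the right-hand side tends to $-\int_D(\partial_i u)\phi$ since $\partial_i u\in L^2(D)\subset L^1_{loc}(D)$. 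Hence $\int_D u\,\partial_i\phi=-\int_D(\partial_i u)\phi$ for every test function, which is precisely the statement that $du$ is the weak derivative of $u$ on $D$.

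I do not expect any genuine obstacle here; the proof is essentially a capacity-of-a-point argument. The only thing to be careful about is that it is the test function, and not $u$, that is being truncated, so that only the $L^1$-smallness of $d\beta_\epsilon$ is available — this is exactly why both hypotheses are used: boundedness of $u$ to control the cross term and to place $u$ in $L^2(D)$, and finiteness of the energy to place $du$ in $L^2(D)$. (Alternatively one could show $\beta_\epsilon u$, recentered by the average of $u$ over $A_\epsilon$ so that a scale-invariant Poincar\'e inequality kills the cross term, converges to $u$ in $L^2_1(D)$; but the distributional computation above is cleaner.)
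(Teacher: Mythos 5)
Your proposal is correct and is essentially the argument in the paper: the paper multiplies $u$ by the cutoff $\phi_\ep$ rather than the test function, but the resulting cross term $\int_D u\, f\, d\phi_\ep$ is the same integral, and it is killed by the same estimate $\|d\phi_\ep\|_{L^1}\le C\ep$ (stated there as $|d\phi_\ep|\le C\ep^{-1}$ on a set of area $O(\ep^2)$). No substantive difference.
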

\begin{proof}
Since $u\in L^\infty$, we need to check that $du$ is the weak derivative of $u$ on $D$.   Take a bump function $\phi:D\rightarrow \Ar$ with $\phi=1$ outside the $1/2$-ball   and equal to 0 near the origin. Let $\phi_\ep(s,t)=\phi(s/\ep,t/\ep)$.  Set $u_\ep=\phi_\ep u$ and note that $u_\ep$ is smooth.  Take a smooth test function $f$.  We have 
$$\langle u, df\rangle_D =\lim_{\ep \rightarrow 0}\langle u_\ep, df \rangle_D =\lim_{\ep \rightarrow 0}\langle du_\ep, f \rangle_D=\lim_{\ep \rightarrow 0}\langle (du)\phi_\ep+ud\phi_\ep, f \rangle_D $$
Since $\lim_{\ep \rightarrow 0}\langle (du)\phi_\ep, f \rangle_D =\langle (du), f \rangle_D$, we need only show that $$\lim_{\ep \rightarrow 0}\langle ud\phi_\ep, f \rangle_D=0$$ So see this, note that $d\phi_\ep$ has support on $D_\ep$ and is bounded by $C\ep^{-1}$ for some uniform $C>0$.  Since $u$ and $f$ are bounded, $\langle u d\phi_\ep, f \rangle_D\leq C' \pi \ep^2 \ep^{-1}$.
  \end{proof}


\begin{thebibliography}{99999}
\bibitem{MS}{\bf D McDuff}, {\bf D Salamon} {\em $J$-holomorphic curves and symplectic
topology} Amer. Mathematical Society, (1994). 
\bibitem{Taylor1}{\bf M Taylor }{\em Partial differential equations I. Basic theory.},  Applied Mathematical Sciences, 115. Springer, New York, 2011.
\bibitem{Lin}{\bf F Lin and C Wang }{\em The Analysis of Harmonic Maps and Their Heat Flows.},  World Scientific, 2008.

\end{thebibliography}
\end{document}